\newcommand{\R}{\mathbb R}
\newcommand{\C}{\mathbb C}
\newcommand{\cC}{\mathcal C}
\newcommand{\cM}{\mathcal M}
\newcommand{\cS}{\mathcal S}
\newcommand{\cU}{\mathcal U}
\newcommand{\af}{\alpha}
\newcommand{\daf}{\dot{\alpha}}
\newcommand{\ga}{\gamma}
\newcommand{\dga}{\dot{\gamma}}
\newcommand{\ep}{\varepsilon}
\newcommand{\om}{\omega}
\newcommand{\Om}{\Omega}
\newcommand{\te}{\theta}
\newcommand{\rip}{\rangle}
\newcommand{\lip}{\langle}
\begin{document}
\allowdisplaybreaks

\newcommand{\arXivNumber}{1611.01573}

\renewcommand{\PaperNumber}{068}

\FirstPageHeading

\ShortArticleName{Null Angular Momentum and Weak KAM Solutions}

\ArticleName{Null Angular Momentum and Weak KAM Solutions\\ of the Newtonian $\boldsymbol{N}$-Body Problem}

\Author{Boris A. PERCINO-FIGUEROA}

\AuthorNameForHeading{B.A.~Percino-Figueroa}

\Address{Facultad de Ciencias en F\'isica y Matem\'aticas, Universidad Aut\'onoma de Chiapas, M\'exico}
\Email{\href{mailto:borispercino@yahoo.com.mx}{borispercino@yahoo.com.mx}}

\ArticleDates{Received November 09, 2016, in f\/inal form August 16, 2017; Published online August 24, 2017}

\Abstract{In [\textit{Arch. Ration. Mech. Anal.} \textbf{213} (2014), 981--991] it has been proved that in the Newtonian $N$-body problem, given a~minimal central conf\/iguration~$a$ and an arbitrary conf\/iguration $x$, there exists a completely parabolic orbit starting on $x$ and asymptotic to the homothetic parabolic motion of~$a$, furthermore such an orbit is a free time minimizer of the action functional. In this article we extend this result in abundance of completely parabolic motions by proving that under the same hypothesis it is possible to get that the completely parabolic motion starting at~$x$ has zero angular momentum. We achieve this by characterizing the rotation invariant weak KAM solutions as those def\/ining a lamination on the conf\/iguration space by free time minimizers with zero angular momentum.}

\Keywords{$N$-body problem; angular momentum; free time minimizer; Hamilton--Jacobi equation}
\Classification{37J15; 37J50; 70F10; 70H20}

\section{Introduction}

\subsection{Preliminaries}\label{prelim}

Let $E=\R^d$ be the $d$-dimensional Euclidean space, $d\geq2$, and consider the $N$-body problem with Newtonian potential function $U\colon E^N\to{} ]0,+\infty]$,
\begin{gather*}U(x)=\sum_{i<j}\frac{m_im_j}{r_{ij}},\end{gather*}
where $x=(r_1,\ldots,r_N)\in E^N$ is a conf\/iguration of~$N$ points having positive masses $m_1,\ldots,m_N$ in $E$, and $r_{ij}=|r_i-r_j|$. Here $|\cdot|$ denotes the Euclidean norm. We will denote by $\|\cdot\|$ to the norm induced by the mass inner product given by
\begin{gather*}x\cdot y=\sum_{i=1}^Nm_i\langle r_i, s_i\rangle,\end{gather*}
with $x=(r_1,\ldots,r_N)$, $y=(s_1,\ldots,s_N)\in E^N$, and $\langle \cdot, \cdot\rangle$ is the Euclidean inner product. We also introduce the moment of inertia:
\begin{gather*}I(x)= \|x\|^2.\end{gather*}

In this section we introduce the variational setting of the problem. The Lagrangian function $L\colon E^{2N}\to {} ]0,\infty]$ is given by
\begin{gather*}L(x,v)=\frac12I(v)+U(x)=\frac 12\sum_{i=1}^Nm_i|v_i|^2+U(x)\end{gather*}
and the action of an absolutely continuous curve $\ga\colon [a,b]\to E^N$ by
\begin{gather*}A_L(\ga)=\int_a^bL(\ga(t),\dga(t)){\rm d}t,\end{gather*}
so that the solutions of the problem are the critical points of the action functional.

For two given conf\/igurations $x,y\in E^N$, we will consider minima taken over the set $\cC(x,y)$ of absolutely continuous curves binding~$x$ and~$y$ without any restriction on time,
\begin{gather*}\cC(x,y):= \bigcup_{\tau>0}\big\{\ga\colon [a,b]\to E^N \text{ absolutely continuous }
b-a=\tau,\,\ga(a)=x,\, \ga(b)=y\big\}.\end{gather*}
The {\em Ma\~n\'e critical action potential} $\phi\colon E^n\times E^n\to[0,+\infty)$ is def\/ined as
\begin{gather*}\phi(x,y):=\inf\{A(\ga) \,\vert\, \ga\in\cC(x,y)\}.\end{gather*}

Here the inf\/imum is achieved if and only if $x\neq y$, this is essentially due to the lower semi-continuity of the action. Marchal's theorem asserts that minimizers avoid collisions in the interior of their interval of def\/inition \cite{Ch, Fe, Mar}.

Let us def\/ine the set of conf\/igurations without collision
\begin{gather}\label{nocol}
\Om:=\big\{x=(r_1,\dots,r_N)\in E^N\,|\, \text{if } r_i=r_j, \text{ then } i=j\big\},
\end{gather}
and let $M:=m_1+\dots+m_N$ the total mass of the system.

Given a conf\/iguration $x=(r_1,\dots,r_N)\in E^N$, the center of mass of $x$ is def\/ined by
 \begin{gather*}
G(x)=\frac{1}{M}\sum_{i=1}^{N}m_ir_i,
\end{gather*}
it is a standard fact that if $\ga\colon J\to E^N$ is a minimizer of the action, then $G(\ga(t))$ has constant velocity.

\begin{definition}
A {\em free time minimizer} def\/ined on an interval $J\subset\R$ is an absolutely continuous curve $\ga\colon J\to E^N$ which satisf\/ies $A(\ga|_{[a,b]})=\phi(\ga(a),\ga(b))$ for all compact subinterval $[a,b]\subset J$.
\end{definition}
An important example is given by A.~Da Luz and E.~Maderna in \cite{DM} where they proved that if~$a$ is a~\emph{minimal configuration}, i.e., a~minimum of the potential restricted to the sphere $I(x)=1$, then the parabolic homothetic ejection with central conf\/iguration $a$ is a free time minimizer. It is not known if there are other central conf\/igurations with this property. A.~Da Luz and E.~Maderna also proved that free time minimizers cannot be def\/ined in the whole line. On the other hand it is also proved that this minimizers are completely parabolic motions.

\subsection{Weak KAM solutions}

Weak KAM solutions of the Hamilton--Jacobi equation related to the problem are useful tools to study free time minimizers. The Hamiltonian associated to the problem is given by
\begin{gather*}
H(x,p)=\tfrac12||p||_*^2-U(x),
\end{gather*}
 where $||p||_*=\sum\limits_{i=1}^Nm_i^{-1}|p|$.

\begin{definition}
A {\em weak KAM solution} of the Hamilton--Jacobi equation
\begin{gather} \label{eq:hj}
 \|Du(x)\|_*^2=2U(x)
\end{gather}
is a function $u\colon E^N\to\R$ that satisf\/ies the following conditions:
\begin{itemize}\itemsep=0pt
\item $u$ is {\em dominated}, i.e., $u(y)-u(x)\le\phi(x,y)$ for all $x,y\in E^N$,
\item for any $x\in E^N$ there is an absolutely continuous curve $\af\colon [0,\infty)\to E^N$ such that $\af(0)=x$ and $\af$ {\em calibrates} $u$, i.e., $u(x)-u(\af(t))=A(\af|_{[0,t]})$ for any $t>0$.
\end{itemize}
\end{definition}

Notice that calibrating curves of weak KAM solutions are indeed free time minimizers. On the other hand, existence of weak KAM solution is proved in \cite{M}, where solutions are characterized as f\/ixed points of the so called Lax--Oleinik semigroup. A variety of weak KAM solutions is also obtained by means of Busemann functions used in Riemannian geometry and introduced in weak KAM theory by G.~Contreras~\cite{G} in the case of regular Hamiltonians; in~\cite{PS} it is proved the following proposition
\begin{proposition}\label{asymps} Let $a$ a minimal central configuration with $\|a\|=1$, define $U(a)=U_0$ and $c:=\big(\frac 92 U_0\big)^{\frac 13}$, consider the parabolic homothetic ejection with central configuration~$a$ given by $\ga_0(t)=ct^{\frac 23}a$. Then the Busemann function
 \begin{gather}\label{Bus}
 u_a(x) =\lim_{t\to+\infty} [\phi(x,\ga_0(t))-\phi(0,\ga_0(t)) ]
 \end{gather}
is a weak KAM solution of the Hamilton--Jacobi equation~\eqref{eq:hj}. Moreover, for any $x\in E^N$ there is a curve $\af\colon [0,\infty)\to E^N$ with $\af(0)=x$ that calibrates $u$ and
\begin{gather*}
\lim_{t\to+\infty}\big\|\af(t) t^{-\frac 23}-cx_0\big\|=0.
\end{gather*}
\end{proposition}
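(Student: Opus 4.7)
The plan is to establish the four claims in order: existence of the Busemann limit, domination, existence of calibrating curves, and the asymptotic profile.

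First I would prove that for fixed $x$ the function $F_x(t):=\phi(x,\ga_0(t))-\phi(0,\ga_0(t))$ converges as $t\to+\infty$. The Da Luz--Maderna theorem says $\ga_0$ is a free time minimizer, so for $0<s<t$ one has $\phi(0,\ga_0(t))=\phi(0,\ga_0(s))+\phi(\ga_0(s),\ga_0(t))$, while the triangle inequality gives $\phi(x,\ga_0(t))\le\phi(x,\ga_0(s))+\phi(\ga_0(s),\ga_0(t))$. Subtracting shows $F_x$ is non-increasing in $t$. Symmetry of $\phi$ (the Lagrangian is reversible) together with the triangle inequality yield $F_x(t)\ge -\phi(0,x)$, so the limit $u_a(x)$ exists. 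Domination is then immediate: $u_a(y)-u_a(x)=\lim_{t\to\infty}[\phi(y,\ga_0(t))-\phi(x,\ga_0(t))]\le\phi(x,y)$ by the triangle inequality.

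Next I would construct a calibrating ray at each $x$. For a sequence $t_n\to+\infty$, let $\af_n\colon[0,\tau_n]\to E^N$ be an action minimizer from $x$ to $\ga_0(t_n)$, which exists by lower semi-continuity of $A$ on $\cC(x,\ga_0(t_n))$ and is collision-free in the interior by Marchal's theorem. I would show that $\tau_n\to+\infty$ and that on any fixed compact interval $[0,T]$ the family $\{\af_n\}$ is equicontinuous with uniformly bounded action, since the moment of inertia along each $\af_n$ is controlled by the parabolic growth of $\ga_0$ and the zero-energy identity valid on minimizers. An Arzel\`a--Ascoli diagonal extraction then produces $\af\colon[0,\infty)\to E^N$ with $\af(0)=x$, and lower semi-continuity of $A$ upgrades $\af$ into a free time minimizer. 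Combining this with domination, $\af$ calibrates $u_a$. Once calibrating curves exist, the HJ equation~\eqref{eq:hj} is obtained at any differentiability point of $u_a$ by differentiating the calibration identity and using the Legendre transform, which, together with domination, is the standard weak KAM characterization.

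The hardest step is the asymptotic statement $\|\af(t)t^{-2/3}-ca\|\to 0$ (reading $x_0=a$). I would argue as follows. Because $\af$ is a free time minimizer its mechanical energy is zero, hence the Lagrange--Jacobi identity gives $\ddot I(\af(t))=2U(\af(t))$. Using that $a$ is a \emph{minimal} central configuration one obtains the sharp inequality $U(y)\ge U_0/\|y\|$ with equality only on the ray $\R_+a$, which combined with the zero-energy condition and minimality forces $I(\af(t))\sim c^2 t^{4/3}$ and forbids any escape from a neighborhood of the minimal ray. Comparing with $\ga_0$ on long intervals $[T,T']$ and using the calibration identity $u_a(\af(T))-u_a(\af(T'))=A(\af|_{[T,T']})$ together with the fact that $\ga_0$ realizes the minimum of $u_a$ along the homothetic ray, I would conclude that $\af(t)/\|\af(t)\|\to a$ and $\|\af(t)\|/t^{2/3}\to c$, which together yield the claimed limit. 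The main technical obstacle is ruling out oscillations of the shape $\af(t)/\|\af(t)\|$ away from the minimal central configuration $a$, where the minimality (not merely criticality) of $a$ is essential.
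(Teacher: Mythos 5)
The paper itself does not prove this proposition; it imports it from \cite{PS}, so I am comparing your sketch with the argument there. Your first three steps --- monotonicity and boundedness of $t\mapsto\phi(x,\ga_0(t))-\phi(0,\ga_0(t))$ via the Da Luz--Maderna theorem \cite{DM} and the triangle inequality, the resulting domination, and the Arzel\`a--Ascoli extraction of a calibrating ray from minimizers $\af_n$ joining $x$ to $\ga_0(t_n)$ --- are exactly the standard Busemann construction used in \cite{PS}. Two technical points are glossed over but are repairable with known results: the equicontinuity of the $\af_n$ on compacta does not follow from the zero-energy identity alone (that bound is circular), but from Maderna's H\"older estimates on $\phi$ \cite{M}; and the calibration identity for the limit curve requires the locally uniform convergence of the Busemann quotients (equi-Lipschitz in $x$ plus monotonicity in $t$, i.e., Dini), not merely ``domination plus free-time minimality.''

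The genuine gap is in the last step. The inequality $U(y)\ge U_0/\|y\|$ is correct (homogeneity of degree $-1$ plus minimality of $a$), but your claim that equality holds \emph{only} on the ray $\R_+a$ is false: $U$ is rotation invariant, so equality holds along the ray through every rotate $R_\te a$, $\te\in{\rm SO}(d)$, and through any other minimal configuration. Consequently the zero-energy/Lagrange--Jacobi argument can at best confine the normalized configuration $\af(t)/\|\af(t)\|$ to a neighborhood of the \emph{set} of normalized minimal configurations; it cannot exclude a slow drift along the ${\rm SO}(d)$-orbit of $a$, which is precisely the ``oscillation'' you acknowledge but do not rule out. Minimality of $a$ is not the mechanism that selects the limit: what selects $a$ is that $\af$ is constructed from minimizers ending on the specific ray $\ga_0(t)=ct^{2/3}a$, and the proof in \cite{PS} obtains $\af(t)t^{-2/3}\to ca$ from quantitative two-sided estimates of $\phi(y,\ga_0(t))$ (comparison with test paths concatenated through points of the homothetic ray) combined with the convergence of the Busemann limit. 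Without an argument of that type, your sketch yields at most $I(\af(t))\sim c^2t^{4/3}$ together with accumulation of the normalized configuration on the set of minimal central configurations, which is strictly weaker than the stated conclusion.
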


A solution def\/ined by identity \eqref{Bus}, will be called \emph{Busemann solution}. It is an open problem to determine if there
 are central conf\/igurations, dif\/ferent from minimal conf\/igurations, def\/ining Busemann solutions.

On the other hand, due to the symmetries of the potential function, it is interesting to determine if weak KAM solutions are invariant under this symmetries. In the case of translation invariance, E.~Maderna proved in \cite{M1} that given a weak KAM solution $u$ of \eqref{eq:hj}, then
\begin{gather*}
u(r_1,\dots,r_N)=u(r_1+r,\dots,r_N+r)
\end{gather*}
for any conf\/iguration $x=(r_1,\dots,r_N)\in E^N$ and every $r\in E$. The proof is achieved by showing that calibrating curves of weak KAM solutions have constant center of mass.

An important question is to determine if weak KAM solutions are rotation invariant, the main goal of this article is to study this problem. Notice that, there are solutions which are not rotation invariant, Busemann solutions given in~\eqref{Bus} for instance. Therefore the problem is to give conditions so that a weak KAM solution is rotation invariant; we achieve this goal by studying the angular momentum for the calibrating curves of rotation invariant solutions and characterizing invariant solutions as those where calibrating curves have zero angular momentum. We obtain rotation invariant solutions by setting
\begin{gather*}
 \hat u_a=\inf_{R\in {\rm SO}(d)}u_{R a}(x),
\end{gather*}
where $a$ is a minimal central conf\/iguration and $u_{R a}(x)$ is the Busemann function associated to~$Ra$.
\subsection{Main theorems}\label{teos}
We consider the diagonal group action on $E^N$ def\/ined by the special orthogonal group ${\rm SO}(d)$, more precisely, the rotation on $E^N$ by an element $\te\in {\rm SO}(d)$ is{\samepage
\begin{gather*}
R_{\te}\colon \ E^N \to E^N, \qquad x=(r_1,\dots,r_N) \mapsto(\te r_1,\dots,\te r_N),
\end{gather*}
where $\te r_i $ is the usual group action of ${\rm SO}(d)$ on~$E$.}

The \emph{Angular momentum} is a f\/irst integral closely related to the action of ${\rm SO}(d)$ on $E^N$. If $x=(r_1,\dots,r_N)\in E^N$ and a~vector $v=(v_1,\dots,v_N)\in E^N$ the angular momentum $C(x,v)$ is def\/ined as
\begin{gather*}
C(x,v)=\sum_{j=1}^Nm_j r_j\wedge v_j.
\end{gather*}
If $d=3$ the $\wedge$ product becomes the usual cross product in $E$. If $d=2$, by identifying $\R^2$ with~$\C$, if $x,v\in\C$ then $r\wedge v=\operatorname{Im}(v\bar r)$, and $r\wedge v$ is a real number.

Let $u\colon E^N\to\R$ a continuous function, we say that $u$ is \emph{rotation invariant} if for any $x\in E^N$ and any $\te\in {\rm SO}(d)$ we have
\begin{gather*}
u(R_\te(x))=u(x).
\end{gather*}

We have the following characterization of invariant weak KAM solutions of~\eqref{eq:hj} in terms of the angular momentum of their calibrating curves.

\begin{theorem}\label{Main1} Let $u$ be a weak KAM solution of the Hamilton--Jacobi equation
\begin{gather*}
 \|Du(x)\|_*^2=2U(x).
\end{gather*}
Then $u$ is rotation invariant if and only if all of its calibrating curves have zero angular momentum. That is to say, for any $\te\in {\rm SO}(d)$ and any $x\in E^N$, the identity
\begin{gather*}u(x)=u(R_{\te}x)\end{gather*}
holds if and only if for any $\ga\colon [0,+\infty[{}\to E^N$ calibrating $u$, we have
\begin{gather*}C(\ga(t),\dga(t))=0.\end{gather*}
\end{theorem}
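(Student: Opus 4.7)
The strategy is to reduce both implications to the identity that, for any skew-symmetric linear map $\xi\colon E\to E$ and any pair $(q,v)\in E^N\times E^N$ with $q=(r_1,\dots,r_N)$, $v=(v_1,\dots,v_N)$, the quantity $\sum_j m_j \lip v_j,\xi r_j\rip$ equals, up to sign, the contraction of $\xi$ with $C(q,v)$; in particular it vanishes for every such $\xi$ if and only if $C(q,v)=0$. The second ingredient is the classical weak KAM correspondence between the velocity of a calibrating curve and the differential of $u$.

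For the \emph{only if} direction, assume $u$ is rotation invariant. Let $\ga$ calibrate $u$ with $\ga(0)=x$, fix $t>0$ and $\xi\in\mathfrak{so}(d)$, and consider a smooth family $\tilde\ga_s\colon[0,t]\to E^N$ with $\tilde\ga_0=\ga|_{[0,t]}$, $\tilde\ga_s(0)=x$, and $\tilde\ga_s(t)=R_{\exp(s\xi)}\ga(t)$. Rotation invariance gives $u(\tilde\ga_s(t))=u(\ga(t))$, and combined with the calibration identity $u(x)-u(\ga(t))=\phi(x,\ga(t))$, the domination $u(x)-u(\tilde\ga_s(t))\le\phi(\tilde\ga_s(t),x)$, and the time-reversibility $\phi(\tilde\ga_s(t),x)=\phi(x,\tilde\ga_s(t))$ of the Ma\~n\'e potential, this forces $\phi(x,\tilde\ga_s(t))\ge\phi(x,\ga(t))=A(\tilde\ga_0)$ for all $s$. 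Since $A(\tilde\ga_s)\ge\phi(x,\tilde\ga_s(t))$, the map $s\mapsto A(\tilde\ga_s)$ attains its minimum at $s=0$. By Marchal's theorem the arc $\ga|_{(0,t)}$ lies in $\Om$, so Euler--Lagrange holds on it, and the first-variation formula (applied with $\delta\tilde\ga_s(0)=0$, $\delta\tilde\ga_s(t)=\xi\ga(t)$) yields $0=\frac{d}{ds}A(\tilde\ga_s)\big|_{s=0}=\sum_j m_j\lip\dot r_j(t),\xi r_j(t)\rip$. Varying $\xi\in\mathfrak{so}(d)$ then forces $C(\ga(t),\dga(t))=0$.

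For the \emph{if} direction, assume every calibrating curve of $u$ has zero angular momentum, and for each $\xi\in\mathfrak{so}(d)$ let $X_\xi(q):=(\xi r_1,\dots,\xi r_N)$ denote the infinitesimal rotation field. Since $u$ is locally Lipschitz, it is differentiable at a.e.\ $x$. At such a point, pick a calibrating curve $\ga$ with $\ga(0)=x$; the collision-free arc $\ga|_{(0,\ep)}$ (again from Marchal's theorem) lets the classical first-variation argument apply, giving $Du(x)\cdot w=\sum_j m_j\lip\dot r_j(0),w_j\rip$ for every tangent vector $w=(w_1,\dots,w_N)$. Taking $w=X_\xi(x)$ produces the contraction of $\xi$ with $C(x,\dga(0))$, which vanishes by hypothesis. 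Thus $X_\xi u=0$ almost everywhere; continuity of $u$ and completeness of $X_\xi$ then imply $u$ is constant on every integral curve of $X_\xi$, and since the one-parameter subgroups $\{\exp(s\xi)\}$ generate ${\rm SO}(d)$, the function $u$ is ${\rm SO}(d)$-invariant.

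The only delicate point in both directions is the applicability of the weak KAM first-variation machinery to the singular Newtonian Lagrangian. The classical Tonelli theory requires smoothness of $L$ along minimizers; here this is supplied by Marchal's theorem, which keeps the interior of any calibrating arc inside the collision-free open set $\Om$ defined in~\eqref{nocol}, so the boundary contributions in the first variation are well defined and the identification of $Du$ with the momentum goes through on $\Om$. Once that identification is in hand, the proof is just an algebraic restatement: rotation-invariance of $u$ is the vanishing of $Du$ on orbits of $X_\xi$, and the duality between momentum and velocity translates this into the vanishing of $C(\ga,\dga)$.
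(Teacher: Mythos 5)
Your \emph{only if} direction is correct and takes a genuinely different route from the paper. The paper invokes the standard weak KAM facts that $u$ is differentiable at $\ga(t)$ for every $t>0$ with ${\rm d}_{\ga(t)}u(w)=w\cdot\dga(t)$, deduces $T_{\ga(t)}M_{\ga(t)}\subset\ker {\rm d}_{\ga(t)}u$ from invariance, and concludes via the Saari decomposition. You instead rotate the endpoint, use domination plus the symmetry of $\phi$ to show that $s\mapsto A(\tilde\ga_s)$ is minimized at $s=0$, and read off $C(\ga(t),\dga(t))=0$ from the boundary term of the first variation. That is a valid and rather more self-contained argument (it does not need the differentiability of $u$ along calibrating curves as an input), at the mild cost of having to justify that $s\mapsto A(\tilde\ga_s)$ is differentiable at $s=0$; taking $\tilde\ga_s(\tau)=R_{\exp(s\chi(\tau)\xi)}\ga(\tau)$ with a smooth cutoff $\chi$ vanishing near $\tau=0$ handles this, since Marchal's theorem keeps $\ga\big((0,t]\big)$ inside $\Om$.

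The genuine gap is in the \emph{if} direction, at the sentence ``continuity of $u$ and completeness of $X_\xi$ then imply $u$ is constant on every integral curve of $X_\xi$.'' This does not follow from continuity alone: the exceptional set where $u$ fails to be differentiable (or where the identity $X_\xi u=0$ fails) has measure zero but can still contain entire integral curves, and a merely continuous function with $X_\xi u=0$ almost everywhere need not be constant along orbits --- Cantor-function-type behaviour along the flow is not excluded. What rescues the argument is exactly the second half of the paper's proof: since $u$ is locally Lipschitz, one first reduces by density and continuity to $x\in\Om$ with $R_\te x\neq x$ (this is also needed so that the calibrating curve has a well-defined initial velocity at the points where you differentiate), builds a transversal slice $B$ to the orbit, checks that $(z,t)\mapsto R_{\exp(t\om)}z$ is a diffeomorphism of $B\times[0,1]$ onto a tube --- which requires a separate case analysis when $t\mapsto R_{\exp(t\om)}x$ is periodic or self-intersecting --- and then applies Fubini to $f(z,t)=u\big(R_{\exp(t\om)}z\big)-u(z)$: one has $\partial f/\partial t=0$ a.e.\ on the tube, hence $\int_A f(y,1)\,{\rm d}y=0$ for every open $A\subset B$, hence $f\equiv 0$ by continuity of $u$. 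You have all the ingredients in hand (local Lipschitz continuity and the a.e.\ identity), but this flow-box/Fubini step is the actual technical content of the implication and cannot be replaced by the appeal to continuity and completeness of the field.
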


We can give a more general result by considering $G$ a Lie group acting properly on $E$. Thus we can consider the diagonal action $S\colon G\times E^N\to E^N$ of $G$ on $E^N$, def\/ined by
\begin{gather*}S_gx=(g r_1,g r_2, \dots, g r_N),\end{gather*}
where $g\in G$, $x=(r_1,\dots,r_N)\in E^N$ and $g r$ is the action of $g$ on $r$. Let us denote by $\mathfrak g$ to the Lie algebra of $G$. We denote by $[\,,\,]$ to the pairing between $\mathfrak g$ and $\mathfrak g^*$.

Notice that the action $S$ can be lifted to $E^N\times E^N$ by $g(x,v)=(S_gx, T_xS_g v)$ where $T_xS_g$ is the dif\/ferential of $S_g$ at $x$. Assume that the Lagrangian is $G$-invariant, i.e., $g^*L=L$, $g\in G$ and assume also that the action lifts to $E^N\times E^N$ by isometries of the mass inner product.

Under such conditions, the group action def\/ines an \emph{equivariant momentum map}
\begin{gather*}
\mu\colon \ E^N\times E^N\to \mathfrak g^*,
\end{gather*}
given by
\begin{gather}\label{equi:mom}
[\mu(x,v),\xi]=v\cdot X_{\xi}(x),
\end{gather}
where $X_{\xi}(x)=\frac{{\rm d}}{{\rm d}t}\bigr|_{t=0}S_{\exp(t\xi)}x$ is the inf\/initesimal generator of the one-parameter subgroup action on~$E^N$, associated to $\xi\in\mathfrak g$.

A continuous function $u\colon E^N\to\R$ is \emph{$G$-invariant} if for any $g\in G$ and any $x\in E^N$ we have
\begin{gather*}
 u(S_gx)=u(x).
\end{gather*}
In a similar way to Theorem \ref{Main1} we can give a characterization to $G$-invariant weak KAM solutions in terms of the equivariant momentum map.

\begin{theorem}\label{Main2}
Let $G$ a connected Lie group acting diagonally on $E^N$ and suppose that the group action satisfies the assumptions above, and let $u$ be a weak KAM solution of the Hamilton--Jacobi equation \eqref{eq:hj}. Then $u$ is $G$-invariant if and only if for any $\ga\colon [0,+\infty[{}\to E^N$ that calibrates $u$, for any $t>0$, we have
\begin{gather*}\mu(\ga(t),\dga(t))=0.\end{gather*}
\end{theorem}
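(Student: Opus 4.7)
The plan is to follow the strategy developed for Theorem~\ref{Main1}, replacing the angular momentum by the equivariant momentum map~$\mu$. The two ingredients are Noether's theorem (the $G$-invariance of~$L$ forces $\mu(\ga(t),\dga(t))$ to be constant along smooth Euler--Lagrange trajectories) and the standard weak KAM identification of the calibrating velocity with a ``gradient'' of the solution. For the forward implication, let $\ga$ calibrate $u$ starting at~$x$. By Marchal's theorem $\ga$ is collision free and hence smooth on $(0,\infty)$, so it satisfies the Euler--Lagrange equation and Noether's theorem gives constancy of $\mu(\ga(t),\dga(t))$ for $t>0$. Fix such a~$t$: the function $u$ is differentiable at $\ga(t)$, and combining the calibration identity $\tfrac{d}{dt}u(\ga(t))=-L(\ga(t),\dga(t))$ with the Hamilton--Jacobi equation~\eqref{eq:hj} and the equality case of Cauchy--Schwarz yields $\dga(t)=-\nabla u(\ga(t))$ (gradient with respect to the mass inner product). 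The $G$-invariance of~$u$ gives $Du(\ga(t))[X_\xi(\ga(t))]=0$ for every $\xi\in\mathfrak g$; by~\eqref{equi:mom},
\[
[\mu(\ga(t),\dga(t)),\xi]=\dga(t)\cdot X_\xi(\ga(t))=-Du(\ga(t))[X_\xi(\ga(t))]=0,
\]
and since $\xi$ was arbitrary, $\mu(\ga(t),\dga(t))=0$.

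For the converse, assume every calibrating curve of~$u$ has zero momentum. Since $G$ is connected it suffices to show $u\circ S_{\exp(s\xi)}=u$ for every $\xi\in\mathfrak g$ and every $s\in\R$. Reversing the previous argument, at every $y=\ga(t)$ with $t>0$ on a calibrating curve the vanishing of $\mu$ combined with $\dga(t)=-\nabla u(y)$ forces $Du(y)[X_\xi(y)]=0$ for each $\xi$. Because every point of~$E^N$ is the origin of some calibrating curve and $u$ is differentiable at every interior point of such a curve, the set $R$ where this annihilation identity holds has full Lebesgue measure in~$E^N$. A Fubini argument along a local transverse to the orbit foliation then shows that for almost every~$x$ the Lipschitz function $s\mapsto u(S_{\exp(s\xi)}x)$ has classical derivative zero almost everywhere and is therefore constant; continuity of~$u$ extends the conclusion to every~$x$.

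The main technical obstacle is the converse. Transferring the pointwise identity $Du\cdot X_\xi=0$, valid on the full-measure but a~priori irregular set~$R$, to constancy of~$u$ along every single orbit requires a careful Fubini argument. For this it helps to know that the $G$-action is regular enough (for instance free outside a negligible set, so that the orbits foliate $E^N$ locally), or, alternatively, one may proceed via a more direct argument exploiting the fact that calibrating curves sweep out open subsets of~$E^N$ in abundance. The forward implication, by contrast, reduces cleanly to Noether's theorem combined with the Hamilton--Jacobi equation.
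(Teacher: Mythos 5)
Your proposal is correct and follows essentially the same route as the paper: the forward implication via the Legendre identity $d_{\ga(t)}u(w)=\pm\, w\cdot\dga(t)$ together with the orthogonality $T_xG_x\perp\mathcal{H}{\rm or}_x$, and the converse via almost-everywhere differentiability of the Lipschitz function $u$ plus a Fubini argument along one-parameter orbits, reduced to all of $G$ by connectedness and products of exponentials. The appeal to Noether's theorem in the forward direction is superfluous (the pointwise identity at each $t>0$ already yields $[\mu(\ga(t),\dga(t)),\xi]=0$ without invoking conservation), and the technical point you flag about the orbit map being a local diffeomorphism is precisely the one the paper addresses in the proof of Theorem~\ref{Main1} by shrinking the transverse ball $B$ and treating the periodic-orbit case separately.
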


\section{Rotation invariance}

Given $x\in E^N$, consider the orbit of $x$ under ${\rm SO}(d)$ given by
\begin{gather*}M_x:=\{R_\te x\,|\,\te\in {\rm SO}(d)\},\end{gather*}
 let us remind that
\begin{gather*}T_xM_x=\{Ax\,|\,A\in \mathfrak{so}(d)\}.\end{gather*}
The key point in the proof of Theorem~\ref{Main1} is the Saari decomposition of the velocities \cite{Ch, S}. Def\/ine
\begin{gather*}\mathcal{H}_x:=\big\{v\in E^N\,|\,C(x,v)=0\big\}.\end{gather*}
Then $T_xM_x\perp\mathcal{H}_x$, with respect to the mass scalar product and
\begin{gather*}
E^N=T_xM_x\oplus\mathcal{H}_x.
\end{gather*}
In other words, if $v\in E^N$, then $v$ can be decomposed as
\begin{gather*}v=v_r+v_h,\end{gather*}
where $v_r\in T_xM_x$, $C(x,v_h)=0$ and $v_r\cdot v_h=0$, moreover the components $v_r$ and~$v_h$ are uniquely determined by~$v$. For dimensions~2 and~3 this is a direct consequence of the properties of the cross product, for dimensions $\geq4$ it is due to the properties of the ``wedge'' product.

On the other hand notice that if $u$ is a rotation invariant function and $x\in E^N$, then $u$ is constant on $M_x$. Thus, if $x$ is a point of dif\/ferentiability of~$u$, we have that
\begin{gather*} T_xM_x\subset \ker {\rm d}_xu.\end{gather*}

\begin{proof}[Proof of Theorem \ref{Main1}] Let $u$ be a rotation invariant weak KAM solution and let $x\in E^N$, consider a curve $\ga\colon [0,+\infty[{} \to E^N$ calibrating $u$ and starting at $x$. It is known that $u$ is dif\/ferentiable at $\ga(t)$ for any $t>0$, we also know that
\begin{gather}\label{leg}
{\rm d}_{\ga(t)}u(w)= w\cdot\dga(t),
\end{gather}
for all $w\in E^N$.

On the other hand, by the previous remark we have that $T_{\ga(t)}M_{\ga(t)}\subset \ker {\rm d}_{\ga(t)}u$ and from~\eqref{leg} we get $\dga(t)\perp T_{\ga(t)}M_{\ga(t)}$, therefore $\dga(t)\in\mathcal{H}_{\ga(t)}$, then{\samepage
\begin{gather*}
C(\ga(t),\dga(t))=0
\end{gather*}
for all $t>0$.}

Let us consider now a weak KAM solution $u$ such that all of its calibrating curves have zero angular momentum. Let $x\in E^N$ and let $\te\in {\rm SO}(d)$. We will prove that $u(R_{\te}x)=u(x)$.

Clearly if $R_{\te}x=x$, the result follows trivially. Suppose that $R_{\te}x\neq x$, since $u$ is continuous and the set of collisionless conf\/igurations~$\Om$, given in~\eqref{nocol}, is open, dense and rotation invariant, we can assume $x\in\Om$.

Since ${\rm SO}(d)$ is compact, $\exp\colon \mathfrak{so}(d)\to {\rm SO}(d)$ is surjective, thus we can take $\om\in\mathfrak{so}(d)$ such that $\exp(\om)=\te$. Def\/ine the curve $\af\colon [0,1]\to E^N$ by $\af(t)=R_{\exp(t\om)}(x)$. Let $\ep>0$ be small enough so that
\begin{gather*}
B:=\big\{z\in E^N\,|\,\lip z,\daf(0) \rip=0,\, \|z-x\|<\ep\big\}\subset\Om.
\end{gather*}
Notice that the set $W:=\{z\in E^N\,|\,R_{\te}z\neq z\}$ is open, therefore we can also choose $\ep>0$, smaller if necessary, so that $B\subset W$.

We can assume that $\ep$ is suf\/f\/iciently small so that the map
\begin{gather}\label{difeo}
B\times [0,1]\to E^N, \qquad (z,t)\mapsto R_{\exp(tw)}z
\end{gather}
is a dif\/feomorphism onto its image. Indeed, If $\af(t)=R_{\exp(t\om)}(x)\neq x$ for every $t\in(0,1]$, then the curve $\af$ is an embedding and, by choosing $\ep$ suf\/f\/iciently small, the map \eqref{difeo} is a~dif\/feomorphism onto its image. Suppose, on the contrary, that $\af(\tau)=x$ for some $\tau\in(0,1]$, then $\af$ is $\tau$-periodic. Let $\tau$ be the minimal period of $\af$, then there exists $s\in(0,\tau)$ such that $\af(s)=R_{\exp(s\om)}(x)=R_{\exp(\om)}(x)$ and~$\af$ has no self intersections in the interval~$[0,s]$. Replacing~$\om$ by~$s\om$, and choosing~$\ep$ suf\/f\/iciently small we get as before that \eqref{difeo} is a dif\/feomorphism onto its image.

Def\/ine
\begin{gather*}C:=\big\{R_{\exp(t\om)}(z)\,|\,z\in B,\,t\in[0,1]\big\}\end{gather*}
and denote by $C'$ to the set point in $C$ where $u$ is dif\/ferentiable. Since $u$ is dominated, $u$ is Lipschitz in $C$ and therefore~$C'$ has total measure in $C$. Notice also that $C\subset\Om$.

Let $y\in C'$ and let $\ga_y\colon [0,+\infty[{}\to E^N$ be a calibrating curve such that $\ga(0)=y$, then ${\rm d}_yu(w)=w\cdot\dga_y(0)$ for any $w\in E^N$. From the hypothesis $\dga_y(0)\in\mathcal{H}_q$, thus $\dga_y(0)\perp T_yM_y$, thus $T_yM_y\subset\ker {\rm d}_yu$ for any $y\in C'$.

Let $f\colon B\times[0,1]\to\R$ the Lipschitz continuous function given by
\begin{gather*}
f(z,t)=u\big(R_{\exp(t\om)}(z)\big)-u(z).
\end{gather*}
Since $T_yM_y\subset\ker {\rm d}_yu$, we get $\frac{\partial f}{\partial t}=0$ almost everywhere, by Fubini theorem in $A\times[0,1]$, with~$A$ any open subset of $B$
\begin{gather*}
0=\int_A\int_{[0,1]}\frac{\partial f}{\partial t}{\rm d}t{\rm d}y=\int_Af(y,1){\rm d}y,
\end{gather*}
therefore $f(y,1)=0$ for every $y\in B$, in particular we have
\begin{gather*}u(R_\te(x))=u(x).\tag*{\qed}\end{gather*}\renewcommand{\qed}{}
\end{proof}

\begin{remark}\label{inf;sol} Let us denote by $\cS$ to the set of weak KAM solutions of \eqref{eq:hj} and notice that if $\cU\subset \cS$ is such that $\underset{u\in\cU}\inf u(x)>-\infty$, then
\begin{gather*}
\tilde u(x)=\underset{u\in\cU}\inf \{u(x)\,|\, u\in \cU\}
\end{gather*}
is in $\cS$. This is due to the fact that weak KAM solutions are the f\/ixed points of the Lax--Oleinik semigroup \cite{M}.
\end{remark}

\begin{corollary}\label{invKAM} Let $a$ be a~minimal central configuration with $I(a)=1$. For any $\te\in {\rm SO}(d)$, let~$u_{R_\te a}$ be the Busemann solution associated to the minimal central configuration~$R_\te a$. Then the function
\begin{gather*}
\hat u_a \colon \ E^N\to\R, \qquad \hat u_a(x):=\inf_{\te\in {\rm SO}(d)}u_{\te,a}(x)
\end{gather*}
is a rotation invariant weak KAM solution of the Hamilton--Jacobi equation. Therefore the callibrating curves of $\hat u_a$ are free time minimizers having zero angular momentum.
\end{corollary}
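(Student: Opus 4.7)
The plan is to use the reduction
$$\hat u_a(x)=\inf_{\te\in {\rm SO}(d)}u_a(R_\te x),$$
which I will derive from the equivariance of the Busemann construction. Once this is available, both the weak KAM property (via Remark \ref{inf;sol}) and rotation invariance are essentially immediate, after which Theorem \ref{Main1} delivers the statement about angular momentum.

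To get the reduction, the key computation is the equivariance identity
$$u_{R_\te a}(x)=u_a(R_{\te^{-1}}x), \qquad \te\in {\rm SO}(d),\ x\in E^N.$$
This follows directly from \eqref{Bus}: because the Lagrangian, and hence the Ma\~n\'e action potential $\phi$, is ${\rm SO}(d)$-invariant under the diagonal action, we have $\phi(R_\sigma y,R_\sigma z)=\phi(y,z)$. Applying $R_{\te^{-1}}$ to both arguments of $\phi(x,ct^{2/3}R_\te a)$ and of $\phi(0,ct^{2/3}R_\te a)$ leaves the difference unchanged except that $R_\te a$ becomes $a$ and $x$ becomes $R_{\te^{-1}}x$; passing to the limit in $t$ gives the identity. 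Taking the infimum in $\te$ and reindexing $\te\mapsto\te^{-1}$ then yields the displayed reduction.

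Since each $u_{R_\te a}$ is continuous and ${\rm SO}(d)$ is compact, the infimum defining $\hat u_a(x)$ is finite at every $x$, so Remark \ref{inf;sol} shows that $\hat u_a\in\cS$. Rotation invariance is then a one-line computation: for $\sigma\in {\rm SO}(d)$,
$$\hat u_a(R_\sigma x)=\inf_{\te}u_a(R_\te R_\sigma x)=\inf_{\te}u_a(R_{\te\sigma}x)=\inf_{\te'}u_a(R_{\te'}x)=\hat u_a(x),$$
after reindexing $\te'=\te\sigma$. Finally, because $\hat u_a$ is a rotation invariant weak KAM solution, Theorem \ref{Main1} forces $C(\ga(t),\dga(t))=0$ along every calibrating curve $\ga$ of $\hat u_a$, and such curves are always free time minimizers as noted right after the definition of weak KAM solution.

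I do not anticipate any serious obstacle here; the only step requiring a touch of attention is the equivariance identity for Busemann solutions, but this is really a tautology once one records the diagonal rotation invariance of $\phi$ and uses that rotations fix the origin.
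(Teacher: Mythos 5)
Your proposal is correct and follows essentially the same route as the paper: establish the equivariance identity $u_{R_\te a}=u_a\circ R_{\te^{-1}}$, invoke Remark~\ref{inf;sol} to see that the infimum is again a weak KAM solution, read off rotation invariance by reindexing over the group, and conclude via Theorem~\ref{Main1}. You actually supply slightly more detail than the paper on the two points it leaves implicit (the derivation of the equivariance identity from~\eqref{Bus} using that rotations fix the origin, and the finiteness of the infimum via continuity in $\te$ over the compact group), so there is nothing to fix.
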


\begin{proof}Let $\cM$ be the set of minimal central conf\/igurations with moment of inertia one. Let $a\in\cM$ and $\te\in {\rm SO}(d)$ let
\begin{gather*}
u_{\te,a}(x)=u_a\circ R_{\te^{-1}}(x),
\end{gather*}
it is not hard to see that $u_{\te,a}$ is also a weak KAM solution, furthermore, notice that $u_{R_\te a}=u_{\te,a}$, thus
\begin{gather}\label{uhat}
 \hat u_a(x)=\inf_{\te\in {\rm SO}(d)}u_{\te,a}(x),
 \end{gather}
and from the previous remark the function on the right is a weak KAM solution.

Due to \eqref{uhat}, $\hat u_a$ is rotation invariant and from Theorem~\ref{Main1}, these solutions def\/ine laminations by free time minimizer with zero angular momentum.
\end{proof}

Given a minimal central conf\/iguration $a$, notice that the rotation invariant weak KAM solution $\hat u_a$ given in the previous Corollary in uniquely determined by~$M_a$, the orbit of~$a$ under~${\rm SO}(d)$, we call this solution \emph{invariant Busemann solution} associated to~$M_a$.

\section[$G$-invariance]{$\boldsymbol{G}$-invariance}

Let $G$ be a connected Lie group acting on $E^N$ with the assumptions of Section~\ref{teos}, let us notice that in this setting, due to~\eqref{equi:mom} the equivariant momentum map def\/ines a Saari decomposition of the velocity (see \cite{AMR,A,MHO}), as follows.

For a f\/ixed momentum value, $\mu(x,v)=\mu$, there are orthogonal vectors $v_{\mathcal H}$ and $v_{\mathcal V}$ such that
\begin{gather*}
v=v_{\mathcal H}+v_{\mathcal V},\qquad
\mu(x,v_{\mathcal V})=\mu \qquad \text{and} \qquad \mu(x,v_{\mathcal H})=0.
\end{gather*}
Let $x\in E^N$ and let $G_x$ be the orbit of $x$ under the $G$-action. Consider the subspaces of $T_xE^N=E^N$
\begin{gather*}
\mathcal{H}{\rm or}_x=\big\{v\in E^N\,|\,\mu(x,v)=0\big\} \qquad \text{and} \qquad T_xG_x,
\end{gather*}
then these subspaces are orthogonal with respect to the mass inner product and
\begin{gather*}
E^N=\mathcal{H}{\rm or}_x\oplus T_xG_x.
\end{gather*}
Thus, any $v\in E^N$ can be uniquely decomposed as
\begin{gather}\label{saarigen}
v=v_{\mathcal H}+v_{\mathcal V},
\end{gather}
where $v_{\mathcal H}\in\mathcal{H}{\rm or}_x$, $v_{\mathcal V}=X_\xi\in T_xG_x$, and $\xi\in \mathfrak g$ is the a element such that $\mu(x,v)=\mu(x,X_\xi)$.

Finally notice that if $u$ is a $G$-invariant function and $x$ a point of dif\/ferentiability of $u$, then $T_xG_x\subset {\rm d}_xu$.

\begin{proof}[Proof of Theorem \ref{Main2}] The main dif\/f\/iculty is the surjectivity of the exponential map, nevertheless it can be avoided as follows. Since $G$ is connected, it is well known that for any $g\in G$, there exists $\xi_1,\dots,\xi_n \in \mathfrak g$ such that $g=\exp(\xi_1)\cdots\exp(\xi_n)$. Therefore, if we can prove that
\begin{gather}\label{restr}
u\big(S_{\exp(\xi)}x\big)=u(x)
\end{gather}
for any $x\in E^n$ and any $\xi\in\mathfrak g$, we get that $u(S_g x)=u(x)$ for any $x\in E^n$ and any $g\in G$. Given the Saari decomposition of the velocities~\eqref{saarigen}, the proof of~\eqref{restr} follows, as the one of Theo\-rem~\ref{Main1}
\end{proof}

We can apply Theorem \ref{Main2} to any connected subgroup $G\subset {\rm SO}(d)$ getting that a solution is $G$-invariant if and only if the corresponding component of the angular momentum of the calibrating curves, in the direction of $\mathfrak g^*$ is null at any instant of the motion.

\subsection*{Acknowledgements}
The author acknowledges the referees for their valuable suggestions and remarks that substantially improved this article. The author is grateful to Ezequiel Maderna for his advice, to H\'ector S\'anchez Morgado for his suggestions and to Eddaly Guerra Velasco for her support in the process of this research.

\pdfbookmark[1]{References}{ref}
\LastPageEnding

\end{document}